\title{{\bf Spanning Tree Auxiliary Graphs}}
\institute{Dhirubhai Ambani Institute of Information  Communication Technology, Gandhinagar, India\\ \email{\{abhishekgarg2009@gmail.com, jadeja\_mahipal@daiict.ac.in, rahul\_muthu@daiict.ac.in\}}}
\author{Abhishek Garg, Mahipal Jadeja, Rahul Muthu}
\date{}
\begin{document}
\maketitle
\begin{quote}
{\bf Abstract} \small In this paper, we define a class of auxiliary graphs associated with simple undirected graphs. This class of auxiliary graphs is based on the set of spanning trees of the original graph and the edges constituting those spanning trees. We call our family of auxiliary graphs {\bf spanning tree auxiliary graphs (STAGs)}. In general, a class of auxiliary graphs can be viewed as a function from the class of graphs to the class of graphs. 

Not all graphs are STAGs of some graph. We prove some key structural properties of STAGs of simple undirected graphs and use them to develop an efficient algorithm to recognise if a graph is a STAG, and if so, compute the inverse (or original graph) from the STAG.

We focus on STAGs of 2-connected graphs. This is justified by another result we derive in this paper: The spanning tree auxiliary graph of a given graph is prime under the cartesian product operator on graphs, if and only if the graph is a 2-connected graph. A corollary to this result is that any spanning tree auxiliary graph has infinitely many preimages, by adding tree-like appendeges to a basic solution. In summary, we develop a result that establishes what one can call the core of the problem and then we develop an algorithm to reconstruct the inverse graph, for this core case.

Additionally we prove results relating parameters of a STAG to (not necessarily the same) parameters of the original graph.
\end{quote}
{\bf keywords}: Spanning trees, spanning tree auxiliary graph, blocks, 2-connected graphs, cartesian product

\section{Introduction}
The generic concept of auxiliary graphs is an important one in graph theory. In its most general form it refers to constructing graphs based on some rules applied to any given graph. In other words it is a function from the set of graphs to the set of graphs. The definition is usually based on some natural and important properties of graphs which get reflected in the auxiliary graph constructed. The computation of the function is easy in principle, even if the algorithm involved could be expensive in terms of computational complexity. What is usually less clear is the range of this function. It is rare for the range of these auxiliary functions to be the entire codomain (in this case the class of all graphs). Thus the challenging and important problems associated with these auxiliary graph families is to characterise mathematical properties of graphs which belong to the range, algorithms for deciding whether a graph belongs to the range or not and also algorithms for computing the inverse image of a given auxiliary graph if it is unique. If the preimage is not unique, then one interesting challenge is to decide what constitutes a minimal/canonical solution and also ways to generate the entire set of solutions by extending the basic solutions.

A well known example of auxiliary graphs is the class of line graphs. The characterisation as well as algorithms for recognising this class of graphs and computing their inverse images has been established in a wide variety of research articles \cite{behzad1970characterization} \cite{lehot1974optimal} \cite{roussopoulos1973max}.

Work on characterising and algorithmically recognising important well-defined classes of graphs has occupied a central place in the field of graph theory. This goes beyond just auxiliary graphs and examples include planar graphs, graphs that are prime under the cartesian product operator, interval graphs, perfect graphs and bipartite graphs. 

In this paper we study a class of auxiliary graphs where the vertices of the auxiliary graph represent the spanning trees of a given graph. There is an edge connecting two vertices of the auxiliary graph precisely when  the symmetric difference of the edge sets of the corresponding spanning trees has exactly two edges. That is equivalent to saying that the two spanning trees have $(n-2)$ of their $(n-1)$ edges common. 

Diagrammatically, one can label the vertices of a simple graph and also its edges with distinct labels. Given such a labelling of a graph $G$, one can label the vertices of the spanning tree auxiliary graph $Aux(G)$, each with the list of $(n-1)$ edges of the spanning tree it represents. From the description above it should be clear that we put an edge between two vertices in the spanning tree auxiliary graph if and only if the labels of the two vertices share $(n-2)$ of their $(n-1)$ elements in common. See the figure below for a graph $G$ and its spanning tree auxiliary graph $Aux(G)$.

	\begin{figure}[htb]
		\centering
		\begin{tabular}{@{}cccc@{}}
			\includegraphics[width=.40\textwidth]{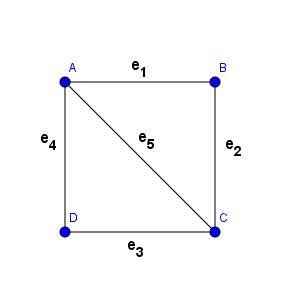} &
			\multicolumn{2}{c}{\includegraphics[width=.50\textwidth]{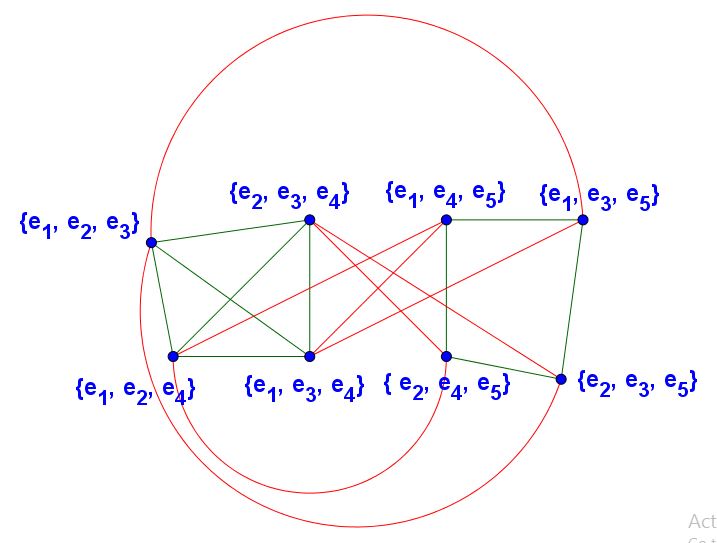}}
		\end{tabular}
		\caption{Construction of $Aux(G)$ from $G$}
	\end{figure}

This way of migrating from one spanning tree of a graph to another has already been studied in various places and a similar notion forms the basis of the proofs of correctness of algorithms such as Prim's and Kruskal's \cite{kruskal1956shortest} for computing minimum spanning trees in weighted graphs. Counting or enumerating the spanning trees of graphs has been extensively studied in the literature \cite{chaiken1978matrix} \cite{kapoor1995algorithms} \cite{shioura1997optimal}. This underlines the importance of this class of graphs. Apart from applications of this class of graphs in various problems as described here, it is also challenging combinatorially and algorithmically to characterise  this family of graphs.

Here, we formalise the notion of spanning tree auxiliary graphs of graphs and characterise them in terms of their mathematical properties. We develop algorithms for recognising such graphs and computing an inverse solution. We provide a complete description of all graphs which constitute preimages on the basis of one basic preimage (the spanning tree auxiliary graph of graphs is not an injective function and each point in the range has infinitely many preimages). We derive relations between parameters of a given graph and (not necessarily the same) parameters of the corresponding auxiliary graph. 

Throughout, we assume the original graph for which we are considering spanning tree auxiliary graphs is a simple undirected graph. We deal only with connected graphs because the set of disconnected graphs have no spanning trees and hence the corresponding spanning tree auxiliary graphs are trivial, with zero vertices.

The rest of this paper is organised as follows. In Section~\ref{secdef} we present definitions and concepts used. We also present therein some of the simpler results we derive, and use them to obtain our major results in other sections. We have done this to improve readability.  In Section~\ref{seccliq}, we provide a classification of all maximal cliques that occur in the class of spanning tree auxiliary graphs. Section~\ref{seccartprod} discusses the role of prime graphs under the cartesian product operator as the building blocks of all spanning tree auxiliary graphs. The spanning tree auxiliary graphs of all 2-connected graphs are shown to belong to the family of prime graphs under the cartesian product operator. We also establish in the same section, that for non-2-connected graphs, the spanning tree auxiliary graph is the cartesian product of the spanning tree auxiliary graph of each of its blocks (assuming the graph is connected). In Section~\ref{secalgo} we provide an algorithm that recognises a graph that is a spanning tree auxiliary graph of a simple graph and computes a basic preimage, from which all preimages may be generated. We derive results on a few elementary standard graph parameters for the family of spanning tree auxiliary graphs in Section~\ref{secpar}. We summarise the results and possible future directions of research in Section~\ref{secconcl}.

\section{Definitions}\label{secdef}
In this section, we present some elementary definitions, and some well know results that we find useful. To improve readability, we also derive some of our simpler results here, and refer to them in the other sections of the paper where they are used to obtain more substantial results. In order to organise the material better we have divided this section into subsections which group together definitions and results which are similar.

\subsection{trees, spanning trees and spanning tree auxiliary graphs}
\begin{definition}
	A tree is a simple undirected connected acyclic graph.
\end{definition} 
\begin{definition}
	A connected unicyclic graph is any graph obtained by augumenting a tree with an edge between a non-adjacent pair of vertices.
\end{definition}
\begin{flushleft}
	{\bf \underline{Unit Transformations}}\\
\end{flushleft}
{\bf \underline{Type $I$}}\\
Given any spanning tree $T$ of a simple connected undirected graph $G$, adding any edge $e\in E(G)\setminus E(T)$ results in a unicyclic graph $U$. Since, $G$ is a simple graph, the unique cycle in $U$   must necessarily be of length at least 3. Suppose it is of length $k$, then there are exactly $(k-1)$ non-cut edges in $U$ different from $e$. Deleting any one of them results in a spanning tree $T'$ of $G$, different from $T$. We call this process of adding an edge to a spanning tree of a connected graph and deleting {\bf some other} edge from the unique cycle thus introduced, a {\bf unit transformation of type $1$}. Any two spanning trees can be constructed from one another by a series of unit transformations of type 1, and in fact in at most $(n-1)$ unit transformations where $n$ is the number of vertices of $G$. 
\begin{flushleft}
	{\bf \underline{Type $II$}}\\
\end{flushleft}
Given any spanning tree $T$ of a graph $G$, deleting any edge $e$ from $T$ results in a spanning forest of $G$ consisting of exactly two trees $T_1$ and $T_2$. Adding any edge of the original graph different from $e$ and linking a vertex of $T_1$ to a vertex of $T_2$ results in a spanning tree $T'$ different from $T$. The number of such edges is equal to the number of edges in $G$ between the vertex partition defined by the vertices of $T_1$ and $T_2$. We call this process of deleting an edge of a spanning tree and relinking the two resulting subtrees by a {\bf different} edge a {\bf unit transformation of type $2$}. Any two spanning trees can also be constructed from one another by a series of unit transformations of type 2, and in fact in at most $(n-1)$ unit transformations where $n$ is the number of vertices of $G$. 

It should also be evident that any two spanning trees can be constructed from one another by a mixed series of type 1 and type 2 unit transformations, again requiring no more than $(n-1)$ steps in the most efficient way.

\begin{definition} 
	Given a simple graph $G$, we define its {\bf spanning tree auxiliary graph $Aux(G)$} as the graph which has a vertex corresponding to each spanning tree of $G$, and two vertices of $Aux(G)$ are adjacent if and only if the corresponding spanning trees in $G$ can be obtained by a single unit transformation. 
\end{definition}
In this paper the main goal is to characterise the set of all graphs which are $Aux(G)$ for some simple graph $G$, and also design an algorithm to reconstruct $G$ from a valid instance of $Aux(G)$. As a subsidiary goal, we also dervie results on various standard graph parameters for the class of STAGs.
\subsection{A specific algorithm for constructing spanning trees}
In this section, we present a new algorithm for computing a spanning tree of a graph. This algorithm is useful to us, not so much for the final answer it gives, as the sequence of steps it follows. Specifically, we are interested in the last step in the algorithm's execution.

The algorithm is a kind of reverse of Kruskal's famous algorithm for computing the minimum weight spanning tree of a graph. The other point of difference, apart from it being a reverse of Kruskal's algorithm is the fact that we are going to apply it to unweighted graphs. Unweighted graphs are really just weighted graphs, where each edge has weight 1. Thus, in this special case, every spanning tree is a minimum weight spanning tree. 

We add, that our algorithm, with its main goal of establishing a fact about its last step of execution is applicable {\bf only} for 2-connected graphs (this concept is defined in the next subsection, for readers not familiar with the concept or its properties). The goal of this algorithm is to establish, {\bf algorithmically}, the following result. 

\begin{theorem}\label{revkrustwoedges}
	For any spanning tree $T$ of a 2-connected graph $G$, and any two edges $e_1$ and $e_2$ on the spanning tree $T$, there is an edge $e$ of $G$ that is not an edge of $T$ such that the connected, spanning unicyclic graph $U=T+e$ contains both $e_1$ and $e_2$ on its unique cycle. 
\end{theorem}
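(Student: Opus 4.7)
The plan is to convert the theorem into a purely structural question about the tree $T$, settle that question using 2-connectedness, and then recast the conclusion in terms of a reverse-Kruskal execution.

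First, I would examine the effect of removing the two edges $e_1$ and $e_2$ from $T$. Since $T$ is a tree, this splits it into exactly three subtrees; since contracting each subtree to a single vertex gives a three-vertex tree and hence a path, exactly one of the three subtrees is incident in $T$ to both removed edges. I label this middle subtree $B$ and the other two $A$ and $C$, so that $e_1$ joins $A$ to $B$ and $e_2$ joins $B$ to $C$. For any non-tree edge $e \in E(G) \setminus E(T)$, the unique cycle of $T+e$ consists of $e$ together with the unique $T$-path between its endpoints, and this $T$-path carries both $e_1$ and $e_2$ exactly when one endpoint of $e$ lies in $A$ and the other in $C$. Hence the theorem is equivalent to the claim that $G$ has at least one edge with one end in $A$ and the other in $C$.

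Next, I would establish this existence claim from 2-connectedness. The easy sub-case is $|B|=1$: the single vertex $b$ of $B$ is then incident to both $e_1$ and $e_2$ in $T$, and if no edge of $G$ joined $A$ to $C$, then $b$ would be a cut vertex separating $A$ from $C$, contradicting 2-connectedness. For the general sub-case $|B|\geq 2$, my plan is to apply Menger's theorem to the endpoint $a^* \in A$ of $e_1$ and the endpoint $c^* \in C$ of $e_2$: the two internally vertex-disjoint $a^*$-to-$c^*$ paths it supplies include the $T$-path as one, and I would argue that the second path, being barred from the internal $B$-vertices of the $T$-path, must eventually cross directly from $A$ into $C$ along some edge of $G$. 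This is the step I expect to be the main obstacle, because a naive application of Menger only forbids the second path from using the $T$-path's $B$-vertices and so permits a detour through other vertices of $B$; a clean proof is likely to require either an induction that shrinks $B$ to a single vertex or an ear-decomposition argument tailored to the pair $e_1,e_2$.

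Finally, with such an edge $e$ in hand, the reverse-Kruskal framing becomes a short observation. At every intermediate stage in which the current edge set contains $T$, every non-tree edge is a non-bridge, because its fundamental cycle in $T$ supplies a detour; so every non-tree edge is legally deletable at every stage. Deleting all non-tree edges other than $e$ in any order leaves exactly the graph $T+e$, whose unique cycle is the one whose existence was just proved. The theorem is then precisely the statement that a suitable reverse-Kruskal execution has, as its penultimate state, a spanning unicyclic graph whose cycle contains both $e_1$ and $e_2$.
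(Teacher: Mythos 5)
Your reduction is correct and considerably sharper than the paper's argument: deleting $e_1$ and $e_2$ from $T$ yields three subtrees $A,B,C$ with $B$ in the middle, and the theorem is indeed equivalent to the existence of an edge of $G$ joining $A$ to $C$. The gap you flag at $|B|\ge 2$ is, however, not one that can be closed: the required $A$--$C$ edge need not exist, so the theorem as stated is false. Take $V(G)=\{a,b_1,b_2,b_3,c\}$, let $T$ be the Hamiltonian path $a\,b_1\,b_2\,b_3\,c$, and let the non-tree edges be $ab_2$, $b_1b_3$, $b_2c$. This graph is 2-connected (deleting any single vertex leaves a connected graph), yet for $e_1=ab_1$ and $e_2=b_3c$ we get $A=\{a\}$, $B=\{b_1,b_2,b_3\}$, $C=\{c\}$, and $G$ has no edge $ac$. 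Equivalently, the three fundamental cycles $a b_1 b_2$, $b_1 b_2 b_3$ and $b_2 b_3 c$ each miss at least one of $e_1,e_2$. A cycle through both $e_1$ and $e_2$ does exist (e.g.\ $a\,b_1\,b_3\,c\,b_2\,a$, as Lemma~\ref{onec} guarantees), but it uses three non-tree edges and is not of the form $T+e$.

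This also pinpoints what goes wrong in the paper's own proof, which you were right not to lean on: the reverse-Kruskal run that postpones all cycles through $e_1$ and $e_2$ terminates at \emph{some} spanning tree rather than the prescribed $T$, and the last cycle it destroys need not be a fundamental cycle of $T$ (in the example above it cannot be, since no fundamental cycle of $T$ contains both edges). Your fallback plan of applying Menger's theorem fails for exactly the reason you anticipate: the second $a^*$--$c^*$ path may detour through $B$, and in the counterexample every $a$--$c$ path does. If the goal is to support the primality argument of Theorem~\ref{2con}, the statement should be weakened, e.g.\ to the claim that the relation ``$e_1$ and $e_2$ lie on a common fundamental cycle of $T$'' has connected transitive closure on $E(T)$ (in the example, $ab_1\sim b_1b_2\sim b_2b_3\sim b_3c$ chains through the three fundamental triangles); the single-cycle assertion proved here is strictly stronger and does not hold.
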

\begin{proof}
	Kruskal's algorithm begins with $n$ isolated vertices and sorts the graph's edges in ascending (or non-descending) order of edge weights. It then considers the edges the edges one by one in this order and includes any edge that does not form a cycle with the earlier included edges, and rejects any edge that forms cycles with earlier included edges. 
	
	We propose a reverse of this idea. That is, we begin with the whole graph. We sort the edges of the graph in descending (or non-ascending) order. We then consider the edges in this order and reject any edge that lies on a cycle with edges not already rejected. It is easy to argue, in much the same way as Kruskal's algorithm, that this algorithm also finds a minimum weight spanning tree. 
	
	We tweak this algorithm, recognising the fact that we are dealing with unweighted graphs, in order to prove the theorem. We make a list of {\bf all cycles} in the graph, and order these cycles such that all cycles that contain {\bf both $e_1$ and $e_2$} are listed after all other cycles. Our proposed Kruskal reversed algorithm essentially destroys cycles in the graph until we are left with a spanning tree. All cycles must be destroyed, and the way the algorithm destroys any cycle is by removing at least one edge on that cycle. By keeping all cycles involving {\bf both $e_1$ and $e_2$} at the end, we are ensuring that the last cycle destroyed by our algorithm is one containing both $e_1$ and $e_2$. More specifically, the last edge deleted by our algorithm to give us our spanning tree is on a cycle containing both $e_1$ and $e_2$. This proves the theorem. 
\end{proof}

\subsection{2-Connected Graphs}
\begin{definition}
	A graph $G$ is $2${\bf-connected} if it cannot be disconnected by deleting fewer than two vertices. In particular, the graph itself must be connected, because otherwise, it is rendered disconnected by removing zero vertices, which is fewer than two.
\end{definition}
By definition, the graph $K_2$ is $2$-connected. Cycles $C_n$ with $n\ge3$ are also 2-connected. All other $2$-connected graphs are constructible by the process of addition of ears, by a result due to Whitney, that we present below.

\begin{definition}
	A {\bf block} in a graph is defined as any maximal $2$-connected subgraph of the graph.  
\end{definition}

It is an elementary result that any two blocks in a graph can share at most one common vertex. A useful auxiliary graph to study the block structure of a connected graph is the standard {\bf block-cutpoint tree} \cite{harary1966block} of a graph. The block-cutpoint tree  of a graph is computed by a standard algorithm which is an adaptation of depth first search ({\sc dfs}).

We would like to state at the very outset that there are infinitely many  graphs which all map to the same $Aux$ graph, and hence we need to develop a notion of a canonical/minimal preimage.  
\begin{definition}
	A {\bf minimal preimage} of a spanning tree auxiliary graph is a connected graph none of whose blocks is $K_2$. The blocks in such a listing maybe linked together in any form allowed by the standard block-cutpoint tree concept.
\end{definition}
The motivation behind the above definition is that the only changes to a graph that do not alter the spanning tree auxiliary graph are addition of blocks which are all $K_2$.

We now define the notion of {\bf ear addition} as used by us. The concept is not a new one, but our definition is slightly different and hence we present it here.
\begin{definition}
	We define an {\bf ear addition} as an extension of a graph by adding a path through zero or more new vertices with two distinct existing vertices of the graph as the endpoints of the path.
\end{definition}
If the endpoints of the path are already adjacent then the ear must contain at least one intermediate vertex since we consider only simple graphs. An {\bf ear decomposition} of a graph is the reconstruction of the graph from scratch by first drawing one of its cycles and then repeatedly  adding an ear.

We now state Whitney's Theorem \cite{whitney2009non} on 2-connected graphs..
\begin{theorem}[Whitney's Theorem]
	A graph is $2$-connected (apart from $K_2$) if and only if  it can be obtained by starting with a cycle and performing zero or more operations of ear addition.
\end{theorem}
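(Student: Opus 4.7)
This is Whitney's classical theorem and I would prove both directions in the standard way. The forward direction---that every graph built from a cycle by ear additions is $2$-connected---should go by induction on the number of ears added, with the key lemma being that a single ear addition preserves $2$-connectivity. The reverse direction---that every $2$-connected graph (apart from $K_2$) admits such a decomposition---should be proved by greedily growing a $2$-connected subgraph of $G$ from an initial cycle, adding one ear at a time while staying inside $G$.

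\textbf{Forward direction.} The base case is immediate: a cycle $C_n$ with $n \geq 3$ is $2$-connected, since deleting any single vertex leaves a path on $n-1 \geq 2$ vertices. For the inductive step, I would take a $2$-connected $H$ and an ear $P$ with endpoints $u, w \in V(H)$ and internal vertices outside $V(H)$, and show $H' = H \cup P$ remains $2$-connected by case-splitting on the deleted vertex $x$. If $x$ is internal to $P$, then $P - x$ breaks into two subpaths, each attached to the untouched $H$ at one of $u$ or $w$, so $H' - x$ is connected. If $x \in V(H)$, then $H - x$ is connected by $2$-connectivity of $H$, and the (possibly empty) set of internal vertices of $P$ is attached to $H - x$ via at least one surviving endpoint of $P$.

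\textbf{Reverse direction.} Given a $2$-connected $G \neq K_2$, I would first observe that $G$ contains a cycle: since $|V(G)| \geq 3$, for any edge $uv$ the $2$-connectivity of $G$ yields two internally disjoint $u$-$v$ paths, which close into a cycle. Fix such a cycle $H_0 \subseteq G$ and inductively construct a nested chain $H_0 \subsetneq H_1 \subsetneq \cdots \subsetneq H_k = G$ in which each $H_i$ is a $2$-connected subgraph of $G$ obtained from $H_{i-1}$ by ear addition inside $G$; the $2$-connectivity of $H_i$ is inherited from the forward direction. The content of the argument is to show that whenever $H \subsetneq G$ is a $2$-connected proper subgraph, an ear from $G$ can be attached. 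I would split into two cases. Case A: $V(H) = V(G)$ but some edge $e = uv$ lies in $E(G) \setminus E(H)$; then $u, v$ are non-adjacent in $H$, so $e$ itself is a valid length-$1$ ear. Case B: $V(H) \subsetneq V(G)$; by connectedness of $G$ there is an edge $uv \in E(G)$ with $u \in V(H)$ and $v \notin V(H)$, and I take a shortest path in $G - u$ from $v$ to $V(H) \setminus \{u\}$, prepending $uv$ to obtain an ear.

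\textbf{Main obstacle.} The most delicate step is Case B of the reverse inductive argument, where I must justify two things simultaneously. First, the required path in $G - u$ from $v$ into $V(H) \setminus \{u\}$ must actually exist; this is precisely where $2$-connectivity of $G$ is invoked, because $G - u$ is connected and $V(H) \setminus \{u\}$ is nonempty (in fact has size at least $2$ since $H$ contains a cycle). Second, a shortest such path cannot have an internal vertex in $V(H)$, for otherwise that vertex would furnish a strictly shorter path hitting $V(H) \setminus \{u\}$, contradicting minimality. Once these two observations are in place the construction is routine, and the whole process terminates because $G$ is finite and $|V(H_i)| + |E(H_i)|$ strictly increases with $i$.
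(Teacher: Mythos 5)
Your proposal is correct. Note that the paper does not prove this statement at all: it is quoted as Whitney's classical theorem with a citation, so there is no in-paper argument to compare against. Your proof is the standard textbook ear-decomposition argument, and both directions are handled soundly --- in particular you correctly deal with the two delicate points in Case B (existence of the path in $G-u$ via $2$-connectivity, and minimality forcing internal vertices off $V(H)$), and your length-$1$ ears in Case A are consistent with the paper's definition of ear addition, which permits a single edge between non-adjacent existing vertices. One minor remark: to produce the initial cycle you invoke the existence of two internally disjoint $u$--$v$ paths, which is itself a nontrivial consequence of $2$-connectivity (Menger for $k=2$); a more elementary route is to observe that a $2$-connected graph on at least $3$ vertices has minimum degree at least $2$ and hence contains a cycle. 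This does not affect correctness.
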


\subsection{Cartesian Product}
Here, we present the definition of the well known operation of cartesian product of graphs, and some properties of this product that are useful for our work.
\begin{definition}
	Given two graphs $G_1=(V_1,E_1)$ and $G_2=(V_2,E_2)$, the cartesian product $H=G_1\Box G_2$ has vertex set $V=V_1\times V_2$ where $\times$ represents the cartesian product of the two vertex sets and an edge connects $(u_1,u_2)$ to $(v_1,v_2)$ if and only if $u_1=v_1$ and $(u_2,v_2)\in E_2$ or $(u_1,v_1)\in E_1$ and $u_2=v_2$.
\end{definition}

This operator defined for two graphs can be extended iteratively to any number of graphs. The operation is commutative and associative in the sense that the graphs obtained by commuting or bracketing a series of graphs in any order gives rise to the same product graph upto isomorphism. 

The graph $K_1$ serves as the identity for the cartesian product operator on graphs. It is well known that the nontrivial factors of a graph under the cartesian product operator are unique upto reordering.

\begin{definition}
	A graph which has only one nontrivial factor under the cartesian product operator is called prime. 
\end{definition}

A graph obtained as the cartesian product of $k$ nontrivial factors \cite{imrich2007recognizing} \cite{imrich1994factoring} is a graph of dimension $k$ under the cartesian product operation. Each vertex in the product graph involving $k$ nontrivial factors is a $k$ dimensional vector where the $i^{th}$ coordinate is a vertex from the $i^{th}$ factor in the product. 

\begin{lemma}
	Let $(v_1,\ldots,v_k)$ be a vertex in $G=G_1\Box\cdots\Box G_k$. Then \[d_G(v_1,\ldots,v_k)=d_{G_1}(v_1)+\cdots+d_{G_k}9v_k)\]
\end{lemma}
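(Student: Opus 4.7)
The plan is to argue directly from the definition, which, once extended iteratively from the binary case to the $k$-ary case, says that two vertices $(u_1,\ldots,u_k)$ and $(v_1,\ldots,v_k)$ are adjacent in $G=G_1\Box\cdots\Box G_k$ iff there exists a unique index $i\in\{1,\ldots,k\}$ such that $u_j=v_j$ for all $j\neq i$ and $(u_i,v_i)\in E(G_i)$. I would first prove this characterisation as a preliminary claim by induction on $k$, using the given binary definition as the base case and the associativity of $\Box$ (already noted in the paper) for the induction step: writing $G=(G_1\Box\cdots\Box G_{k-1})\Box G_k$, a neighbour either agrees on the last coordinate and is a neighbour in the $(k-1)$-fold product (hence, by induction, differs in exactly one of the first $k-1$ coordinates via an edge of the corresponding $G_j$), or agrees on the first $k-1$ coordinates and differs in the last via an edge of $G_k$.

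Having established this adjacency characterisation, I would then partition the neighbourhood of $v=(v_1,\ldots,v_k)$ in $G$ according to the (unique) coordinate $i$ in which a given neighbour differs. Call these sets $N_i(v)$ for $i=1,\ldots,k$. By the characterisation, the map $(v_1,\ldots,v_{i-1},u_i,v_{i+1},\ldots,v_k)\mapsto u_i$ is a bijection between $N_i(v)$ and the open neighbourhood $N_{G_i}(v_i)$ of $v_i$ in $G_i$, so $|N_i(v)|=d_{G_i}(v_i)$. Since the $N_i(v)$ are pairwise disjoint (a neighbour differs from $v$ in exactly one coordinate) and their union is all of $N_G(v)$, summing cardinalities yields
\[
d_G(v_1,\ldots,v_k)=\sum_{i=1}^{k}|N_i(v)|=\sum_{i=1}^{k}d_{G_i}(v_i),
\]
which is the desired identity.

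There is no real obstacle here: the argument is a routine unpacking of the definition, and the only mildly delicate point is making the iterative adjacency characterisation precise for general $k$, which the induction above handles cleanly. I would keep the write-up short, emphasising the bijection $N_i(v)\leftrightarrow N_{G_i}(v_i)$ and the disjointness of the $N_i(v)$, rather than expanding the inductive step in detail.
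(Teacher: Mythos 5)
Your proof is correct: the adjacency characterisation for the $k$-fold product, the partition of the neighbourhood by the unique coordinate of difference, and the bijections $N_i(v)\leftrightarrow N_{G_i}(v_i)$ together give exactly the degree-sum identity. The paper itself supplies no proof for this lemma (it is stated as a known property of the cartesian product), and your argument is the standard one that would be expected there, so there is nothing further to reconcile.
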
 

\begin{lemma} \label{prime}
	Since nontrivial factors involve a minimum degree of at least 1, the presence of every edge of a vertex in the same factor implies the graph is prime.
\end{lemma}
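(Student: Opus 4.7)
The plan is to prove the lemma by contrapositive: I will show that if $G$ is not prime, then no vertex of $G$ can have all its incident edges attributed to a single factor of the product decomposition. The main tool is the degree-additivity formula from the preceding lemma, together with the standing assumption that we only work with connected graphs.

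Suppose then that $G$ is not prime. Then $G$ admits a nontrivial cartesian product decomposition $G = G_1 \Box G_2 \Box \cdots \Box G_k$ with $k \geq 2$ and every $G_i \neq K_1$. Because we restrict attention to connected graphs throughout the paper, each factor $G_i$ is itself connected on at least two vertices, so every vertex of $G_i$ has degree at least $1$ in $G_i$. This is precisely the minimum-degree condition invoked in the statement.

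Now fix any vertex $v = (v_1, \ldots, v_k)$ of $G$. For each coordinate $i$, pick a neighbor $v_i'$ of $v_i$ in $G_i$, which exists by the previous paragraph. By the definition of the cartesian product, the tuple obtained from $v$ by replacing $v_i$ with $v_i'$ is adjacent to $v$ in $G$, and that edge is contributed by the $i$-th factor. Hence $v$ has at least one incident edge coming from each of the $k \geq 2$ factors, so not all edges incident to $v$ lie in the same factor. The contrapositive of this conclusion is exactly the statement of the lemma: if some vertex of $G$ has all its incident edges in a single factor, then no nontrivial decomposition exists, i.e.\ $G$ is prime.

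I do not anticipate a serious technical obstacle, since the result is essentially an immediate consequence of the formula $d_G(v_1, \ldots, v_k) = d_{G_1}(v_1) + \cdots + d_{G_k}(v_k)$. The only point that deserves explicit mention is why each nontrivial factor has minimum degree at least $1$, which is precisely where the standing connectedness assumption of the paper enters, and this justifies the phrase with which the lemma begins.
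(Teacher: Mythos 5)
Your argument is correct and is essentially the justification the paper itself intends: the lemma's phrase about minimum degree at least $1$ in each nontrivial factor, combined with the degree-additivity formula, is exactly the observation that every vertex receives at least one incident edge from each factor, so a vertex with all edges in one factor forces $k=1$. The paper leaves this as a one-line assertion; your contrapositive write-up fills in the same reasoning without deviating from it.
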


The dimension of each vertex is identical and is the same as the dimension of the graph under cartesian product. Thus, in order to establish that a graph is prime under the cartesian product operator, it is enough to establish that all edges incident to some vertex belong to the same factor. 

\begin{lemma}
	All edges of a clique of size three or more in a cartesian product of graphs must all come from the same factor.
\end{lemma}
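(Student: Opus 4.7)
The plan is to reduce the statement to the case of a triangle, and then argue by a coordinate-mismatch that a triangle cannot use two different factors. Throughout I will use the fact, implicit in the definition of the cartesian product, that every edge of $G=G_1\Box\cdots\Box G_k$ carries a unique ``color'': an edge $uv$ belongs to factor $i$ precisely when the endpoints $u=(u_1,\ldots,u_k)$ and $v=(v_1,\ldots,v_k)$ satisfy $u_j=v_j$ for all $j\neq i$ and $u_iv_i\in E(G_i)$.

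First I would handle a triangle $\{a,b,c\}$. Suppose, for contradiction, the edge $ab$ comes from factor $i$ and the edge $ac$ comes from factor $j$ with $i\neq j$. Reading off coordinates: $a$ and $b$ agree on all coordinates except the $i$-th, and $a$ and $c$ agree on all coordinates except the $j$-th. Hence $b$ and $c$ necessarily differ in both the $i$-th coordinate (where $b$ differs from $a$ but $c$ does not) and the $j$-th coordinate (where $c$ differs from $a$ but $b$ does not). Since the edges of a cartesian product require agreement on all but exactly one coordinate, $bc$ cannot be an edge of $G$, contradicting the fact that $\{a,b,c\}$ is a triangle. Thus all three edges of the triangle share a common factor.

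Next I would bootstrap from triangles to arbitrary cliques $K$ of size at least three in $G$. Fix any three vertices $a,b,c\in K$; by the previous step the edges $ab$, $bc$, $ac$ all come from a single factor, call it $i$. For any fourth vertex $d\in K$, the triangle $\{a,b,d\}$ then forces both $ad$ and $bd$ to lie in factor $i$ as well, since one of its edges, namely $ab$, already does. Iterating this argument across all vertices of $K$ shows every edge of $K$ belongs to factor $i$, which is what we wanted.

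The only genuinely non-routine step is the triangle argument, since the clique-extension reduces to it transparently. That step in turn is a short coordinate count, so no real obstacle is anticipated; the proof is essentially a bookkeeping exercise on the coordinate representation of vertices in the product.
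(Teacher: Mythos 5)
Your proof is correct and is simply a careful write-out of the argument the paper waves at with ``easy to see from the definition'': the coordinate-mismatch argument for a triangle, followed by the routine extension to larger cliques via shared edges. Nothing is missing; the key observation that two adjacent edges from different factors force their far endpoints to differ in two coordinates is exactly the intended content.
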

\begin{proof}
	This is easy to see, by looking at the definition of the cartesian product of graphs.
\end{proof}

\section{Classification of Maximal cliques in $Aux(G)$ in terms of structures in $G$}\label{seccliq}
Here we describe cliques on three or more vertices in $Aux(G)$. Each spanning tree of a graph $G$ has exactly $(n-1)$ edges where $n$ is the number of vertices in $G$. Consider a clique of size three in $Aux(G)$. This clique represents three spanning trees of $G$ each pair among which there is exactly $(n-2)$ common edges. There are two possibilities for the common intersection of the edge sets of all three spanning trees. Either it is $(n-3)$ or it is $(n-2)$. If we consider any fourth vertex to augment the three clique to a four clique, then in the first case, the common intersecton of the edge sets of the four spanning trees will go down to $(n-4)$, while in the second case it will remain $(n-2)$. The same logic extends to larger cliques. If it is a clique of type 1, then the common intersection decreases for each added vertex, while if it is of type 2, the common intersection remains $(n-2)$. Structurally cliques of type $1$ arise from cycles in $G$ and cliques of type $2$ arise from minimal edge cuts in $G$. 

\begin{lemma} \label{maxclique}
	Every clique of size $3$ or more in $Aux(G)$  uniquely extends to a maximal clique. Therefore these cliques can be computed in polynomial time.
\end{lemma}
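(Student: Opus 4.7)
The plan is to build on the classification of cliques sketched in the paragraph preceding this lemma. Every clique of size at least three in $Aux(G)$ is either of type 1 (arising from a cycle of $G$) or of type 2 (arising from a minimal edge cut), and the type is detected by the common intersection of the member spanning trees. For any $3$-clique $K=\{T_1,T_2,T_3\}$ I would set $F := T_1\cap T_2\cap T_3$ and note that $|F|=n-2$ precisely when $K$ is of type 2 and $|F|=n-3$ precisely when $K$ is of type 1. The two cases are mutually exclusive, so $K$ cannot extend to maximal cliques of both types.

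For the type 2 case, $F$ is a spanning forest of $G$ with two components, and $T_i = F\cup\{x_i\}$ for distinct edges $x_i$ joining the two components. I would show that any spanning tree $T$ adjacent in $Aux(G)$ to all three $T_i$ must contain $F$ in its entirety: if $T$ were obtained from $T_1$ by removing some $f\in F$ and inserting a fresh edge $w\notin T_1$, then pairwise adjacency to $T_2$ would force $w=x_2$ while adjacency to $T_3$ would force $w=x_3$, contradicting $x_2\neq x_3$. Hence $T = F\cup\{y\}$ for some $y$ joining the two components of $F$ in $G$, and the collection of all such spanning trees forms a clique which is maximal. Its uniqueness follows because it depends only on $F$, which is determined by $K$.

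For the type 1 case, $F$ is a spanning forest with three components, and the three pairwise symmetric differences identify distinct edges $e_1,e_2,e_3\notin F$ of $G$ with $T_i = F\cup\{e_j,e_k\}$ for $\{i,j,k\}=\{1,2,3\}$. The subgraph $H := F\cup\{e_1,e_2,e_3\}$ has $n$ vertices and $n$ edges, contains the spanning tree $T_1$, and is therefore connected and unicyclic; its unique cycle $C$ must contain $e_1,e_2,e_3$ since each $T_i = H\setminus\{e_i\}$ is a spanning tree. A case analysis analogous to the type 2 argument (based on which edge of $T_1$ is omitted from $T\cap T_1$) then shows that every spanning tree $T$ adjacent to all of $T_1,T_2,T_3$ is of the form $H\setminus\{e\}$ for some $e\in E(C)$, noting that removing an edge of $H$ that is not on $C$ would disconnect it. The unique maximal clique extending $K$ is therefore $\{H\setminus\{e\}:e\in E(C)\}$, which is uniquely determined by $K$ via $H$ and its unique cycle.

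For cliques of size at least four, any $3$-subclique lies in a unique maximal clique by the above, and for a common larger clique all $3$-subcliques yield the same maximal extension, since they share the same ambient $F$ (and cut) in type 2 or the same ambient $H$ (and cycle $C$) in type 1. For the polynomial time claim, it suffices to enumerate all triangles of $Aux(G)$ (at most $O(|V(Aux(G))|^3)$), extend each using the constructions above, and deduplicate. The main obstacle is the case analysis in the type 1 argument: one must carefully verify, using both the pairwise intersection constraints and the requirement that $T$ be a spanning tree of $G$, that no spanning tree outside $\{H\setminus\{e\}:e\in E(C)\}$ can be adjacent to all three of $T_1,T_2,T_3$.
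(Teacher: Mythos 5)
Your proposal is correct and follows essentially the same route the paper takes: the paper states Lemma~\ref{maxclique} without a formal proof, relying on the preceding classification of $3$-cliques by whether $|T_1\cap T_2\cap T_3|$ equals $n-2$ (minimal edge cut type) or $n-3$ (cycle type), which is exactly your dichotomy. The type~1 case analysis you defer does close cleanly: if $T$ is adjacent to all three $T_i$ and omits some $f\in F$, then $|T_i\setminus T|\le 1$ forces $T\supseteq H\setminus\{f\}$ and hence $T=H\setminus\{f\}$ with $f\in E(C)$, while if $T\supseteq F$ a parity count on $|\{a,b\}\cap\{e_1,e_2,e_3\}|$ shows $T$ is one of the original three trees.
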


The neighbourhood of each vertex in $Aux(G)$ can be partitioned into maximal cliques in these two different ways. In the first case the number of cliques in the partition is $m-n+1$ one corresponding to each edge of $G$ not in the spanning tree $T$. In the second case the number of cliques in the partition is $n-1$, one for each edge in the spanning tree $T$.

Each maximal clique in $Aux(G)$ is a direct and exclusive consequence of either a cycle in $G$ or a minimal edge-cut in $G$. The size of the cliques resulting in these two cases are the length of the cycle and the number of edges in the edge-cut respectively. To summarise:

{\bf Due to cycles:}\\ Take any cycle $C$ of length $k$ in $G$. Consider a spanning tree $T$ which uses some $(k-1)$ of the edges of this cycle. Let $F$ be the forest resulting by deletion of these $(k-1)$ edges from $T$. Clearly appending any path of $(k-1)$ edges of the cycle $C$ to the edges of $F$ result in a spanning tree of $G$ and differ from any other such tree in exactly one edge. Thus these are all pairwise adjacent and form a maximal clique of size $k$ in $Aux(G)$. 

{\bf Due to minimal edge-cuts:}\\ We assume $G$ is connected and let $\mathcal{E}$ constitute a minimal edge cut of $G$ containing $k$ edges. The deletion of the edges of $\mathcal{E}$ results in a two component graph. Take any fixed spanning forest of this two component graph containing spanning trees $T_1$ and $T_2$ of the two components respectively. Cross connecting $T_1$ and $T_2$ with any of the $k$ edges of $\mathcal{E}$ results in a spanning tree of $G$. Clearly each of these spanning trees differ from each other in exactly one edge. Thus, they constitute a maximal clique of size $k$ in $Aux(G)$.

There are two basic ways of creating a new spanning tree of a graph starting from a given spanning tree of the same graph. These are very similar to each other as single operations go but when we consider a series of these operations (or more precisely a large number of possibilities of completing the second phase of these operations) the difference between them becomes important and hence we consider both. These are presented in Section~\ref{secdef}.

\section{Minimal Preimage and multiple preimages}\label{seccartprod}
In this section, we establish that the STAG's of 2-connected graphs other than the trivial case $K_2$ are all prime under the cartesian product operator on graphs. We additionally demonstrate that the STAG of a general connected graph is the cartesian product of the STAGs of its individual blocks. As a consequence of this result, it follows that any STAG has infinitely many preimages, which maybe obtained by adding any number of $K_2$ blocks to a basic solution. Since the STAG of $K_2$ is the single vertex graph $K_1$, the identity for the cartesian product operation on graphs, this kind of change to the original graph doesn't result in any change in the STAG. These results are established over a series of theorems and lemmas.

\begin{theorem}
	The auxiliary graph of $G$ consisting of blocks $B_1,\ldots,B_k$ is the cartesian product of the individual auxiliary graphs. That is: \[Aux(G)=Aux(B_1)\Box \cdots \Box Aux(B_k)\]  
\end{theorem}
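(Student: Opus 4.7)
The plan is to prove the theorem by exhibiting an explicit graph isomorphism between $Aux(G)$ and $Aux(B_1)\Box\cdots\Box Aux(B_k)$. I would build it in two stages: first establish the vertex bijection using the block structure, then check that adjacency is preserved in both directions using the unit-transformation characterisation.

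For the vertex bijection, I would use the standard structural fact that the blocks $B_1,\ldots,B_k$ of a connected graph share pairwise at most one vertex (a cutpoint) and that the block-cutpoint tree is a tree. From this I would argue that any spanning tree $T$ of $G$ restricts to a spanning tree $T_i = T\cap E(B_i)$ on each block, because every edge of $G$ lies in a unique block, so $E(T)$ partitions across the blocks, and acyclicity plus connectivity of $T$ forces each part to be a spanning tree of its block (any missing edge inside a block would disconnect that block's vertices in $T$, contradicting connectivity of $T$ via the block-cutpoint tree). Conversely, given any tuple $(T_1,\ldots,T_k)$ of spanning trees of the individual blocks, I would verify that $T_1\cup\cdots\cup T_k$ is a spanning tree of $G$: it spans $V(G)$ trivially, it is connected because the block-cutpoint tree is a tree and each block is internally spanned, and it is acyclic because any cycle would have to lie inside a single block (a cycle cannot pass through a cutpoint twice), contradicting that each $T_i$ is a tree. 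This gives a bijection between $V(Aux(G))$ and $V(Aux(B_1)\Box\cdots\Box Aux(B_k))$.

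For the edge condition, I would work from the definition that two spanning trees are adjacent in $Aux(G)$ iff they differ in exactly two edges (equivalently, in exactly one swap). Suppose $T$ and $T'$ are adjacent in $Aux(G)$, and write $T\leftrightarrow(T_1,\ldots,T_k)$, $T'\leftrightarrow(T'_1,\ldots,T'_k)$. Because every edge of $G$ belongs to a unique block, the symmetric difference $T\triangle T'$ is the disjoint union of the $T_i\triangle T'_i$. Since $T_i$ and $T'_i$ are spanning trees of $B_i$ they have the same edge count, so each nonzero $T_i\triangle T'_i$ has even size $\geq 2$. Hence $|T\triangle T'|=2$ forces exactly one index $i$ with $T_i\neq T'_i$ and $|T_i\triangle T'_i|=2$, i.e.\ $T_i$ and $T'_i$ are adjacent in $Aux(B_i)$ and all other coordinates agree. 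This is exactly the cartesian-product adjacency condition. The converse is immediate from the same identity: if two tuples agree in all but one coordinate $i$ and the $i$-th coordinates are adjacent in $Aux(B_i)$, then the corresponding spanning trees differ in exactly two edges.

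The main obstacle I anticipate is the vertex bijection rather than the edge condition, specifically the verification that gluing spanning trees of the blocks along cutpoints always yields a spanning tree of $G$ and that this operation inverts the block-by-block restriction. This is where one must be careful about the block-cutpoint tree: the argument relies on the fact that no cycle of $G$ can use edges from two different blocks, and that the union of block-internal spanning subgraphs remains connected precisely because the block-cutpoint tree is connected. Once this structural bookkeeping is in place, the edge portion of the isomorphism follows cleanly from the block-disjointness of $E(G)$.
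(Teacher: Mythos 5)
Your proposal is correct and follows essentially the same route as the paper: both establish the bijection between spanning trees of $G$ and tuples of spanning trees of its blocks (using that every edge and every cycle lies in a single block, so restriction and gluing are mutually inverse), and then verify that adjacency in $Aux(G)$ corresponds to changing exactly one coordinate. The only minor difference is in the adjacency step, where the paper argues that the two edges involved in a unit transformation lie on a common cycle and hence in the same block, whereas you use a parity count on the blockwise symmetric differences ($|T_i\triangle T'_i|$ even and $\ge 2$ when nonzero); both arguments are valid and yield the cartesian-product adjacency condition.
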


\begin{proof}
	Consider any graph $G$ and any spanning tree $T$ of $G$. Let $B$ be some block of $G$ and let $T[B]$ be the subgraph of $T$ induced by the vertices of $B$. Clearly $T[B]$ is a tree. If it were a forest with more than one component, it means there exists a path leaving the vertices of $B$ and coming back linking distinct vertices of $B$ via a path with vertices outside $B$. This contradicts the assumption that $B$ is a block. 
	
	Thus, the spanning trees of any connected graph can all be be obtained by patching together in any way individual spanning trees of each block of $G$. In fact any spanning tree of $G$ can be obtained by this procedure and  and any tree resulting from this patching together of spanning trees of blocks is also a spanning tree of $G$. 
	
	It follows from the above that any spanning tree of a graph can be viewed as an (ordered) list of spanning trees of its individual blocks. Different spanning trees of the graph can be obtained by starting with some spanning tree and then varying independently the spanning trees of each block. In other words the set of all spanning trees of the graph can be obtained as vectors of dimension $k$ where $k$ is the number of blocks of $G$. 
	
	We may also recall that in $Aux(G)$, two vertices (representing two distinct spanning trees in $G$) are adjacent if and only if they can be obtained from each other via a unit transformation. Also the edges involved in this unit transformation must both come from the same block of $G$ since they form a part of a cycle in $G$ and there can be no cycle crossing more than one block. Thus we can say that the two "adjacent" spanning trees agree in their restriction to all blocks except one, and on the one where they disagree, they differ by a unit transformation. If we were to treat these spanning trees as $k$ dimensional vectors one for each block of $G$, then the STAG is the cartesian product of the individual STAGs of each block.
\end{proof}

Here we describe the properties which make two or more graphs map to the same $Aux$ graph. 

\begin{lemma}
	If $G'$ is obtained from a graph $G$ by iteratively appending new blocks to $G$ each of which is a $K_2$, then it results in no change, and $Aux(G')$ from $Aux(G)$.  
\end{lemma}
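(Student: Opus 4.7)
The plan is to reduce this lemma directly to the block-decomposition theorem that was just established, namely that $Aux(G) = Aux(B_1) \Box \cdots \Box Aux(B_k)$ for the blocks $B_1,\ldots,B_k$ of a connected graph. The key observation is that $K_2$ has a unique spanning tree (itself), so $Aux(K_2)$ is the single-vertex graph $K_1$, and $K_1$ is the identity element for the cartesian product operator on graphs.

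First I would clarify what ``appending a $K_2$ block'' means: since blocks meet at cut vertices, this amounts to adding a new vertex $v$ together with an edge $\{u,v\}$ from $v$ to some vertex $u$ already in the current graph (the new vertex $u$ may itself have been introduced by a prior $K_2$ appending). In particular, the resulting graph $G'$ is still connected, and its block decomposition consists precisely of the blocks $B_1,\ldots,B_k$ of $G$ together with the newly appended $K_2$ blocks, say $B_{k+1},\ldots,B_{k+\ell}$. This is standard from the block-cutpoint tree structure and does not require any independent argument.

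Next I would invoke the previous theorem on $G'$ to write
\[
Aux(G') \;=\; Aux(B_1) \,\Box\, \cdots \,\Box\, Aux(B_k) \,\Box\, Aux(B_{k+1}) \,\Box\, \cdots \,\Box\, Aux(B_{k+\ell}).
\]
Since each $B_{k+i} = K_2$ has exactly one spanning tree, $Aux(B_{k+i}) = K_1$, and because $K_1$ is the identity for $\Box$, every such factor may be removed without changing the graph up to isomorphism. Applying the block-decomposition theorem again to $G$ gives $Aux(G) = Aux(B_1) \Box \cdots \Box Aux(B_k)$, so $Aux(G') \cong Aux(G)$.

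There is essentially no hard step here; the only thing that needs a moment of care is the verification that the block structure of $G'$ really is that of $G$ with the added $K_2$'s as extra blocks, and that ``iteratively'' appending such blocks preserves this property at each stage (so that an induction on the number $\ell$ of appended $K_2$ blocks goes through cleanly). Once that is in place, the conclusion is immediate from the identity role of $K_1 = Aux(K_2)$ under $\Box$.
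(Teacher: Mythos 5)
Your proposal is correct and follows essentially the same route as the paper, which simply notes that $Aux(K_2)=K_1$ and hence $Aux(G')=Aux(G)\Box K_1=Aux(G)$ via the block-decomposition theorem. Your version is just a more careful elaboration, spelling out the induction on the number of appended $K_2$ blocks and the verification of the block structure of $G'$.
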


This is because $Aux(K_2)=K_1$ and $Aux(G')=Aux(G)\Box K_1=Aux(G)$.

\begin{definition}[common cycle membership relation]
	For any undirected graph $G$ let us define a binary relation over the edge set $E$. We say two edges are related if they lie on a common cycle.
\end{definition}

\begin{lemma}
	The relation defined above is an equivalence relation when $G$ is a bridgeless (cut-edge free) graph. The equivalence classes in this case are the edges of any block of the graph.
\end{lemma}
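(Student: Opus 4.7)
The plan is to check the three axioms and then identify the classes with the edge sets of the blocks, the two tasks being essentially one and the same. Reflexivity $e \sim e$ is the statement that $e$ lies on some cycle, which is precisely the non-bridge condition, and so is supplied for every edge by the bridgeless hypothesis. Symmetry is immediate from the definition of the relation.

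Transitivity is the only substantive step, and I would prove it by way of the following characterization: two edges of a graph lie on a common cycle if and only if they lie in the same block. The forward direction is easy because a cycle is itself $2$-connected and hence contained in a unique maximal $2$-connected subgraph, namely a single block. For the backward direction, observe that in a bridgeless graph no block can be $K_2$ (a $K_2$ block is a bridge), so every block is a $2$-connected graph on at least three vertices, and the classical fact that any two edges of such a $2$-connected graph lie on a common cycle then applies. Granted this characterization, transitivity follows at once: if $e_1 \sim e_2$ and $e_2 \sim e_3$ then all three edges share the (unique) block containing $e_2$, so $e_1 \sim e_3$; at the same time, the equivalence classes are seen to be exactly the edge sets of the blocks, since each edge belongs to exactly one block.

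The main obstacle is the backward direction of the block characterization, i.e., producing a common cycle through two prescribed edges $e$ and $f$ of a $2$-connected block. The cleanest route I foresee is to subdivide $e$ by a new vertex $a$ and $f$ by a new vertex $b$; the resulting graph remains $2$-connected, so Menger's theorem yields two internally vertex-disjoint $a$--$b$ paths whose union is a cycle through both $e$ and $f$. An alternative, more in keeping with the machinery already set up in the paper, is induction on Whitney's ear decomposition: the base case of a single cycle is immediate, and in the inductive step the newly added ear either contains both of the edges in question (combine with any path of the existing graph joining its endpoints to get the desired cycle) or at most one of them (in which case the cycle produced by the inductive hypothesis on the smaller subgraph, possibly rerouted through the new ear, still passes through both). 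Either route yields the characterization and thereby completes the proof.
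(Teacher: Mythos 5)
Your proof is correct, but it takes a genuinely different route from the paper. The paper handles transitivity by a direct surgery on the two given cycles: starting at $e_3$ on $C_2$ it walks in both directions to the first vertices $x,y$ of $C_1$, and splices the resulting arc of $C_2$ with the $x$--$y$ arc of $C_1$ that contains $e_1$, producing a cycle through $e_1$ and $e_3$ with no appeal to block structure or to Menger's theorem. You instead reduce everything to the characterization ``two edges lie on a common cycle iff they lie in the same block,'' deriving the backward direction from the classical fact that any two edges of a $2$-connected graph on at least three vertices lie on a common cycle, proved via subdivision plus Menger. Your approach buys something the paper's proof does not deliver: the lemma as stated also asserts that the equivalence classes are exactly the edge sets of the blocks, and your argument establishes that identification explicitly, whereas the paper's proof only verifies the three axioms. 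The cost is that you import Menger's theorem (or a careful ear induction); note that your ear-decomposition alternative is the sketchier of your two routes --- the case where the new ear carries one edge and the old graph the other needs the additional fact that a $2$-connected graph has an $x$--$y$ path through any prescribed edge, which your phrase ``possibly rerouted through the new ear'' does not quite supply --- and you should not shortcut it by citing the paper's Lemma~\ref{onec}, since in the paper that lemma is itself deduced from the present one and you would create a circularity. The Menger route is clean and self-contained, so the proof stands.
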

\begin{proof}
	Since we assume the graph $G$ is bridgeless, every edge lies on a cycle, and hence every edge trivially lies on a cycle containing itself. Thus the relation is reflexive.
	
	Clearly if $e_1$ lies on a common cycle with $e_2$ then $e_2$ also lies on the same common cycle with $e_1$. Thus the relation is symmetric.
	
	The only nontrivial property is transitivity. Consider a bridgeless graph containing three distinct edges $e_1$, $e_2$ and $e_3$. Suppose $e_1$ and $e_2$ both lie on a common cycle $C_1$. Suppose that $e_2$ and $e_3$ lie on a common cycle $C_2$. We will show that there is a cycle $C_3$ that contains both $e_1$ and $e_3$. 
	
	We start on edge $e_3$ on cycle $C_2$ and move in both directions along the cycle, until we encounter the first intersections with cycle $C_1$ in each direction. Such intersecting points must exist and be distinct, since the two cycles share the edge $e_2$. Let these intersecting points be vertices $x$ and $y$. There is an $x$ to $y$ path on $C_1$ containing the edge $e_1$. This is because for any two distinct vertices on a cycle, and an edge on the same cycle, there is a path on the cycle between the two vertices passing through that edge. These fragments (paths) from the cycles $C_2$ and $C_1$ that meet at $x$ and $y$ form a cycle containing both $e_1$ and $e_3$. 
\end{proof}
\begin{lemma} \label{onec}
	In $2$-connected graphs, every pair of edges has at least one cycle containing both.
\end{lemma}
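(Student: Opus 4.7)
The plan is to derive the lemma as a direct corollary of Theorem~\ref{revkrustwoedges}. The key observation is that the hard combinatorial content has already been absorbed into that earlier algorithmic theorem, so all that remains is to arrange for its hypothesis (both edges lying on a common spanning tree) to be satisfied.

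First I would observe that for any two distinct edges $e_1, e_2$ of a simple graph $G$, the edge set $\{e_1, e_2\}$ is acyclic: in a simple graph any cycle contains at least three edges, so two edges cannot by themselves close a cycle regardless of how their endpoints are arranged (two disjoint edges, or a length-two path). Hence $\{e_1, e_2\}$ is a forest contained in the connected graph $G$, and by the standard matroid-extension fact (equivalently, by running Kruskal greedily starting from $\{e_1,e_2\}$) this forest extends to a spanning tree $T$ of $G$ with $e_1, e_2 \in E(T)$.

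Next, since $G$ is assumed 2-connected and $T$ is a spanning tree of $G$ containing both $e_1$ and $e_2$, I would apply Theorem~\ref{revkrustwoedges} to obtain an edge $e \in E(G)\setminus E(T)$ such that the unique cycle of the unicyclic graph $T+e$ passes through both $e_1$ and $e_2$. This cycle is a cycle of $G$ containing both $e_1$ and $e_2$, which is exactly what the lemma asserts.

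There is essentially no main obstacle; the only subtlety to flag is the extension step, where one must explicitly note that two edges of a simple graph cannot already form a cycle, so nothing prevents their joint inclusion in a spanning tree. Once that is observed, the lemma becomes a one-line consequence of Theorem~\ref{revkrustwoedges}. A useful byproduct worth noting in the write-up is that, combined with the previous equivalence-relation lemma, this completes the identification of the equivalence classes of the common-cycle relation with the edge sets of blocks in an arbitrary bridgeless graph.
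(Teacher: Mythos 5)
Your derivation is formally valid as a deduction from the statement of Theorem~\ref{revkrustwoedges}: two distinct edges of a simple graph indeed form a forest, every forest in a connected graph extends to a spanning tree, and the theorem then hands you a non-tree edge $e$ whose fundamental cycle in $T+e$ passes through both $e_1$ and $e_2$. This is a genuinely different route from the paper, which instead proves the lemma by induction on the number of ears in Whitney's decomposition of the $2$-connected graph: the base case is a single cycle, each added ear closes a cycle with a path in the previously built subgraph, and the transitivity of the common-cycle-membership relation (established in the preceding lemma for bridgeless graphs) is then used to chain together a common cycle for an arbitrary pair of edges. Your version is shorter and reuses existing machinery, and the spanning-tree-extension observation is exactly the right bridging step.

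However, there is a circularity hazard you should flag before adopting this route. The paper's proof of Theorem~\ref{revkrustwoedges} works by listing all cycles of $G$, ordering those containing both $e_1$ and $e_2$ last, and concluding that the last cycle destroyed by the reverse-Kruskal procedure is one of them. That argument only delivers the stated conclusion if the set of cycles containing both $e_1$ and $e_2$ is nonempty --- which is precisely the assertion of Lemma~\ref{onec}. In other words, the theorem's proof tacitly presupposes the lemma you are trying to derive from it, and the paper's self-contained ear-decomposition proof of the lemma is what keeps the overall logical structure sound. If you treat Theorem~\ref{revkrustwoedges} as a black box your argument goes through, but to make your derivation genuinely independent you would first need a proof of that theorem which does not already assume the existence of a cycle through two prescribed tree edges.
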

\begin{proof}
	Proof is by induction on the number of ears, as per Whitney's decomposition. The base case is a cycle. Clearly in this case each pair of edges has a common cycle on which they lie. Let us consider the induction step when we add a new ear. The newly added ear with endpoints $x$ and $y$ forms a cycle together with some $x$ to $y$ path in the graph before the ear was added. By induction hypothesis for every pair of edges, prior to adding this ear, there is a cycle containing both.  By the equivalence relation proved above, our claim is established.
\end{proof}

\begin{theorem}\label{2con}
	Let $G$ be a 2-connected graph, different from $K_2$, and let $H=Aux(G)$. Then $H$ is a prime graph under the cartesian product operation.
\end{theorem}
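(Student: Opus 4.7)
The plan is to apply Lemma~\ref{prime}: it suffices to exhibit one vertex $v$ of $H=Aux(G)$ such that all edges of $H$ incident to $v$ must belong to the same factor in every cartesian decomposition. I would pick any spanning tree $T$ of $G$ and let $v$ be the corresponding vertex of $H$.

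Each neighbour of $v$ arises from a unit transformation, which I record as an ordered pair $(e,f)$ with $e\in E(G)\setminus E(T)$, $f\in E(T)$, and $f$ lying on the fundamental cycle $C_e(T)$; the neighbour is $T+e-f$. A direct symmetric-difference calculation shows that two such neighbours $T+e-f$ and $T+e'-f'$ are themselves adjacent in $H$, i.e.\ form a triangle with $v$, precisely when $e=e'$ (and $f\neq f'$) or $f=f'$ (and $e\neq e'$). By the earlier lemma that all edges of a clique of size at least $3$ in a cartesian product come from a single factor, two edges at $v$ whose transformation pairs share their added edge or their removed edge must lie in the same factor; transitivity of "lying in the same factor" then extends this to any chain of such coincidences.

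This reduces the claim to a pure connectivity question. Form the bipartite graph $B$ with parts $E(T)$ and $E(G)\setminus E(T)$, and put a $B$-edge $\{f,e\}$ exactly when $f\in C_e(T)$. Then the edges of $H$ at $v$ correspond bijectively to the edges of $B$, and two edges at $v$ are triangle-related with $v$ exactly when the corresponding $B$-edges share an endpoint. Thus it suffices to show $B$ is connected. Theorem~\ref{revkrustwoedges} provides exactly the key input: for any two tree edges $f_1,f_2\in E(T)$ there exists a non-tree edge $e$ with both $f_1,f_2\in C_e(T)$, so the entire tree-edge side of $B$ lies in a single component. Since $G$ is simple, every fundamental cycle has length at least $3$, so each non-tree edge has at least two tree-edge neighbours in $B$ and is therefore attached to that same component; hence $B$ is connected.

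The main obstacle, and the reason the theorem is nontrivial, is that generic pairs of neighbours of $v$ in $H$ are not themselves adjacent—the triangle relation at $v$ is sparse—so one needs a genuinely global statement about $G$ to propagate "same factor" across all edges at $v$. Theorem~\ref{revkrustwoedges} is precisely that statement, and it is where the hypothesis that $G$ is 2-connected (and not $K_2$, so that non-tree edges actually exist) enters; combined with Lemma~\ref{prime} it closes the argument.
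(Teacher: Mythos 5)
Your proposal is correct and follows essentially the same route as the paper: fix the vertex $v$ corresponding to a spanning tree $T$, observe that neighbours sharing the added edge (cycle cliques) or the deleted edge (edge-cut cliques) form triangles with $v$ and hence force their edges into one factor, and then invoke Theorem~\ref{revkrustwoedges} to tie all these cliques together. Your bipartite incidence graph $B$ is just a cleaner bookkeeping device for the same connectivity argument the paper makes informally.
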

Here we focus on an arbitrary vertex $x$ in $Aux(G)$ and argue that all the edges incident to $x$ in $Aux(G)$ come from the same factor. This will imply that $Aux(G)$ is prime under the cartesian product operator. We will, of course, have to use the fact that $G$ is $2$-connected in the course of our proof. 
Consider the spanning tree $T_x$ of $G$ corresponding to the vertex $x$ in $Aux(G)$. The edges incident to $x$ in $Aux(G)$ connect it to its neighbours. Hence, these correspond to spanning trees of $G$ obtained from $T_x$ by a single unit transformation. Consider an edge $e_1$ in $T_x$. Deleting $e_1$ from $T_x$ results in a spanning forest of $G$ with exactly two trees $T_1$ and $T_2$. Since $G$ is $2$-connected, there is at least one edge in $G$ apart from $e_1$ linking the vertices of $T_1$ to the vertices of $T_2$. The spanning trees obtained by reconnecting $T_1$ and $T_2$ using any of these edges all constitute a minimal edge cut clique. Hence, all these edges connecting $x$ to this set of neighbours come from the same factor of a cartesian product.

Similarly, all neighbours of $x$ obtained by deleting an edge $e_2$ and applying a unit transformation of type $II$ also form a clique. Hence, all edges to this group of neighbours of $x$ also come from the same factor of a cartesian product. 

Now we just need to establish that at least one edge from each of these groups together come from the same factor of a cartesian product. Since $G$ is 2-connected, by applying Theorem~\ref{revkrustwoedges} we know there is an edge $e$ in $G$ and not in $T_x$, such that $U_{x,e}=T_x+e$ is a connected, spanning unicyclic graph with its unique cycle containing both $e_1$ and $e_2$. The trees obtained by adding $e$ and deleting in turn $e_1$ and $e_2$ result in two neighbours of $x$ that belong to the cycle clique associated with the addition of edge $e$ to $T_x$. Thus the edges between $x$ and these two neighbours come from the same factor. However these two edges lie in respetively the minimal edge cuts associated with deletion of $e_1$ and $e_2$. Thus all these edges (and by extension all edges incident to $x$) come from the same factor of a cartesian product. This proves that the auxiliary graph is prime under the cartesian product operation.

\begin{figure}[htb]
	\centering
	\includegraphics{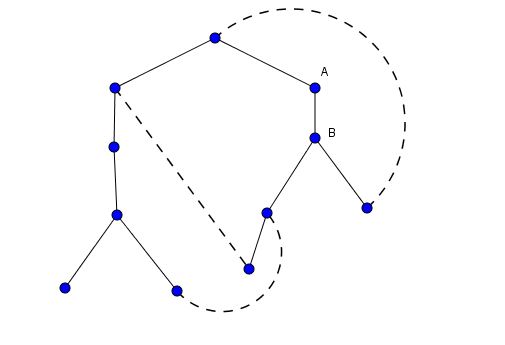}
	\caption{ Idea of  Type $I$ and Type $II$ cliques for edge (A.B) }
\end{figure}

Thus $Aux(G)$ is prime if $G$ is 2-connected and has no cut-edges.  

\begin{lemma}
	$Aux(T)=K_1$ for any tree $T$. 
\end{lemma}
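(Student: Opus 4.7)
The plan is to observe that a tree is its own unique spanning tree, so there is exactly one vertex in $Aux(T)$ and consequently no edges, which is exactly the definition of $K_1$.

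First I would recall the definition of a spanning tree: a spanning subgraph that is connected and acyclic, i.e.\ it is itself a tree on the same vertex set. Next I would argue that if $T$ is a tree on $n$ vertices, then $T$ has exactly $n-1$ edges, and any spanning tree of $T$ must also have exactly $n-1$ edges. Since a spanning tree is a subgraph of $T$, and $T$ itself has only $n-1$ edges to begin with, the only spanning subgraph with $n-1$ edges is $T$ itself. Hence $T$ has a unique spanning tree, namely $T$.

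From the definition of $Aux(G)$ the vertex set is in bijection with the set of spanning trees of $G$. Applied to $G = T$, this gives $|V(Aux(T))| = 1$. Since $Aux(T)$ has only one vertex, it trivially has no edges (the adjacency condition in the definition of $Aux$ requires two distinct spanning trees). Therefore $Aux(T)$ is the single-vertex graph $K_1$, as claimed.

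There is really no obstacle here: the argument is essentially a one-line observation and is stated only to justify the remark in the preceding section that appending $K_2$ blocks (or more generally, tree-like appendages) does not affect the STAG, since their internal contribution to the cartesian product is $K_1$.
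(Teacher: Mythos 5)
Your proof is correct and follows the same route as the paper, which simply notes that a tree has exactly one spanning tree (itself), so $Aux(T)$ has a single vertex and equals $K_1$. Your additional edge-counting justification for uniqueness of the spanning tree is a harmless elaboration of the same one-line observation.
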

This follows because a tree has exactly one spanning tree. Note that the spanning tree auxiliary graph of a tree- the complete graph on one vertex, $K_1$ - is also the identity for the cartesian product operator. Thus, from the previous two theorems, we conclude, that appending any number of blocks which are trees to a given graph, does not alter the spanning tree auxiliary graph of that graph. Thus minimal preimages contain no cut-edges.
\section{Characterisation of $Aux$}\label{secalgo}
Here we describe some properties of spanning tree auxiliary graphs and use them to develop an algorithm to compute the inverse. The first subsection deals with reconstruction of inverse, for the case when the inverse is a 2-connected graph. The second subsection extends the algorithm to graphs that are not 2-connected.

\subsection{2-connected graphs}
As described in Section~\ref{seccliq}, we can partition the neighbourhood of any vertex in a spanning tree auxiliary graph of a 2-connected graph, into cliques in two ways. Such a partition gives us two sets of two simultaneous equations in the variables $n$ (number of vertices of $G$) and $m$ (number of edges of $G$). Solving these equations, we can determine the number of vertices and edges of $G$. This calculation must be consistent across all vertex neighbourhoods of any candidate STAG. In addition to the number of vertices, we get the sizes and structure of all the minimal edge cut and cycle type cliques in the neighbourhood of a vertex of $Aux(G)$. 

Assuming the information is consistent across all vertex neighbourhoods, our algorithm proceeds as follows. We compute $n$ and $m$. We use the partitions of the neighbourhood of a fixed vertex $x$ in the STAG $Aux(G)$ into maximal cliques in two ways (minimal edge cuts and cycles). We label the minimal edge cut cliques as $\mathcal{E}_1,\ldots,\mathcal{E}_{n-1}$ and the cycle cliques as $\mathcal{C}_1,\ldots,\mathcal{C}_{m-n+1}$. We now give a secondary label to each minimal edge cut clique, as the set of cycle cliques it intersects. Thus, we have a labelling of each of the minimal edge cut cliques as a subset of the set of cycle cliques. This list of $n-1$ labelled minimal edge cut cliques can be treated as the $n-1$ edges of the spanning tree. These edges are labelled by cycle cliques, which can also be viewed as the fundamental cycles passing through the particular edge of the spanning tree.

\begin{lemma}\label{Spanningtreelabels}
	The following are properties of the decomposition of the neighbourhood of a vertex in the STAG of a 2-connected graph.
	\begin{itemize}
		\item The set of all spanning tree edges containing a particular fundamental cycle in its label forms a path.
		\item No two fundamental cycles are used in the labels of an identical set of tree edges.
		\item Any two incident tree edges have at least one fundamental cycle common to their labels.
	\end{itemize} 
\end{lemma}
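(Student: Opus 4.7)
The plan is to translate each of the three combinatorial claims back into elementary tree-and-cycle statements about the spanning tree $T_x$ of $G$, and then read them off from standard properties of fundamental cycles. The central preparatory step is to identify what it means for a minimal edge-cut clique $\mathcal{E}_i$ to intersect a cycle clique $\mathcal{C}_j$. From the classification in Section~\ref{seccliq}, $\mathcal{E}_i$ corresponds to a tree edge $e_i$ of $T_x$ (its neighbours in $\mathcal{E}_i$ are spanning trees $T_x-e_i+f$ where $f$ reconnects the two components), and $\mathcal{C}_j$ corresponds to a non-tree edge $f_j$ (its neighbours are spanning trees $T_x+f_j-e$ for $e$ on the fundamental cycle $C(f_j)$). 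Hence $\mathcal{E}_i\cap \mathcal{C}_j\neq\emptyset$ iff the tree $T_x-e_i+f_j$ is one of these neighbours, iff $e_i$ lies on $C(f_j)$. So the label of $\mathcal{E}_i$ is exactly $\{\mathcal{C}_j: e_i\in C(f_j)\}$, i.e.\ the list of non-tree edges whose fundamental cycle passes through $e_i$.

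Once this translation is in place, the first claim is immediate: the spanning tree edges whose label contains $\mathcal{C}_j$ are the tree edges lying on $C(f_j)$, and these form precisely the unique $T_x$-path between the endpoints of $f_j$, which is a path by definition. For the second claim I would argue by contradiction: if $\mathcal{C}_j$ and $\mathcal{C}_k$ appeared in labels of the same set of tree edges, then the tree paths $C(f_j)\setminus\{f_j\}$ and $C(f_k)\setminus\{f_k\}$ would coincide as edge sets, hence as vertex sets, and in particular would share their endpoints. But the endpoints of each tree path are the endpoints of the corresponding non-tree edge, so $f_j$ and $f_k$ would be parallel edges — impossible since $G$ is simple, forcing $f_j=f_k$.

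The third claim is where the 2-connectivity of $G$ is actually needed, and this is the place where I would invoke Theorem~\ref{revkrustwoedges}. For any pair of tree edges $e_1,e_2$ (incidence is not even necessary for the statement to hold, but is all we need), that theorem guarantees a non-tree edge $e$ of $G$ such that the unique cycle of $T_x+e$ contains both $e_1$ and $e_2$. This cycle is exactly $C(e)$, and so the cycle clique associated with $e$ appears in the labels of both $\mathcal{E}_1$ and $\mathcal{E}_2$. I do not expect any of these three steps to be difficult individually; the only real obstacle is being careful that the identification of labels with fundamental-cycle membership is argued cleanly, since both partitions of the neighbourhood of $x$ live inside the same graph $Aux(G)$ and one must verify that ``$\mathcal{E}_i\cap \mathcal{C}_j$'' as abstract cliques indeed matches the tree-replacement interpretation — which ultimately follows from Lemma~\ref{maxclique}, i.e.\ from the uniqueness of the maximal clique extension of a triangle in $Aux(G)$.
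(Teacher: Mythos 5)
Your proposal is correct, and in fact the paper offers no proof of Lemma~\ref{Spanningtreelabels} at all --- it is stated and immediately used to justify the reconstruction algorithm --- so your argument supplies exactly what is missing rather than diverging from an existing one. The load-bearing step is the one you flag yourself: identifying the label of a minimal edge-cut clique $\mathcal{E}_i$ with the set of fundamental cycles through the tree edge $e_i$, via the observation that $\mathcal{E}_i\cap\mathcal{C}_j\neq\emptyset$ precisely when $T_x-e_i+f_j$ is a spanning tree, i.e.\ when $e_i$ lies on $C(f_j)$. Once that translation is made, the first bullet is the standard fact that $C(f_j)\setminus\{f_j\}$ is the unique $T_x$-path between the endpoints of $f_j$, and the second bullet follows from simplicity of $G$ exactly as you argue (equal edge sets of a path determine its endpoints, so two non-tree edges with the same fundamental tree-path would be parallel). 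Your use of Theorem~\ref{revkrustwoedges} for the third bullet is evidently what the authors intended --- it is the only place in the paper, besides Theorem~\ref{2con}, where that theorem's ``two prescribed tree edges on one fundamental cycle'' conclusion is needed, and as you note it proves the stronger statement for \emph{any} pair of tree edges, not just incident ones. No gaps.
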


Our reconstruction algorithm thus consists of two major phases. The first is to obtain a fundamental-cycle labelled spanning tree. The second is to add the non-tree edges to the spanning tree to reconstruct the original graph. The second phase uses the labelling of the spanning tree edges by cycle cliques.

\begin{flushleft}
	{\bf \underline{Phase 1: Constructing the spanning tree:}}
\end{flushleft}
\begin{enumerate}
	\item Use the two partitions of the neighbourhood of the vertex into maximal cliqiues to obtain a labelling of all the minimal edge cut cliques by subsets of cycle cliques. 
	\item Treat the labels obtained in Step 1, as the $n-1$ edges of the spanning tree labelled by the fundamental cycles they are involved in.
	\item Consider an arbitrary fundamental cycle and form a path using all the edges containing it in their labels. Now rearrange the edges of this path until all edges containing any particular fundamental cycle in their labels are subpaths of the path. 
	\item Repeat Step 3, to include the remaining edges and obtain a tree. 
\end{enumerate}
\begin{flushleft}
	{\bf \underline{Phase 2: Adding the non-tree edges to the spanning tree}}
\end{flushleft}
\begin{enumerate}
	\item For each fundamental cycle locate the endpoints of the path in the spanning tree, consisting of all edges using this cycle in its label.
	\item Add an edge between the two endpoints of each path obtained in step 1, to complete reconstruction of the graph.
\end{enumerate}

\subsection{General Graphs}
Given an arbitrary graph we first compute its prime factors under the cartesian product operation using well known algorithms  \cite{factoralgpap}. The result of the previosu subsection applies only to $2$-connected instances of $G$ (and consequently $Aux(G)$ is prime under cartesian product). For graphs which are not $2$-connected the algorithm uses Theorem~\ref{2con} to reduce into several subproblems and then apply the above result. 

We can then put together the blocks obtained as solutions for each factor in the cartesian product, using the block-cut-point tree model to obtain various inverse images.

We can extend the basic solutions by adding any number of tree-like appendages as we wish.

\subsection{Analysis}
We give here a brief informal analysis of running time of the algorithm provided by us. 
\begin{itemize}
	\item Prime factors of the input graph can be computed in polynomial time  \cite{feigenbaum1985polynomial}.
	\item From Observation~\ref{maxclique}, it is possible to compute all the maximal cliques in polynomial time (assuming the graph is the spanning tree auxiliary graph of some graph). This can be done in $O(n^4)$ time because there are $O(n^3)$ triangles and each extends greedily to unique  maximal clique.
	\item If the decomposition of the previous step is consistent across all the vertices then $n(G)$ and $m(G)$ can be computed in polynomial time.
	\item The two phase reconstruction algorithm of spanning tree followed by non-tree edges can be done in polynomial time.
\end{itemize}

\section{Parameters}\label{secpar}
In this section we give some elementary results on some standard graph parameters of spanning tree auxiliary graphs of graphs.
\begin{lemma}
	{\bf Max. degree:}\[\Delta(Aux(G))\le (n(G)-1)*(m(G)-n(G)+1)\]
\end{lemma}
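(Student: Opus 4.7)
The plan is to obtain the bound by counting the possible single-edge swaps that yield an adjacent spanning tree. Fix an arbitrary vertex $x$ in $Aux(G)$ and let $T_x$ denote the spanning tree of $G$ it represents. By the definition of $Aux(G)$, each neighbour $y$ of $x$ corresponds to a spanning tree $T_y$ obtained from $T_x$ by a single unit transformation, and any such unit transformation is completely specified by a pair $(e, e')$ where $e \in E(T_x)$ is the edge removed from the tree and $e' \in E(G)\setminus E(T_x)$ is the edge added (whether we view the operation as type~$I$ or type~$II$ is immaterial: the resulting tree $T_x - e + e'$, and hence the pair, is the same).

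Next, I would bound the number of such pairs. Since $T_x$ has exactly $n(G)-1$ edges and $G$ has $m(G)$ edges in total, the number of non-tree edges is $m(G) - (n(G)-1) = m(G) - n(G) + 1$. Hence the number of ordered pairs $(e,e') \in E(T_x) \times (E(G)\setminus E(T_x))$ is exactly
\[
(n(G)-1)\cdot (m(G)-n(G)+1).
\]
Distinct neighbours of $x$ arise from distinct pairs (since the resulting trees differ in exactly the swapped edge), so the degree of $x$ in $Aux(G)$ is at most the total number of such pairs.

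Since $x$ was arbitrary, taking the maximum over all vertices yields $\Delta(Aux(G)) \le (n(G)-1)\cdot(m(G)-n(G)+1)$, as required. The only potential subtlety, and the one worth remarking on, is that this bound is not tight: a pair $(e,e')$ contributes a genuine neighbour only when $e'$ joins the two components of $T_x - e$, equivalently when $e$ lies on the unique cycle of $T_x + e'$. So many pairs are wasted, but this only strengthens the inequality and the stated upper bound holds unconditionally.
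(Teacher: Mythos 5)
Your proof is correct and takes essentially the same approach as the paper: both bound the degree of a vertex by counting (tree edge, non-tree edge) swap pairs, yielding the product $(n(G)-1)(m(G)-n(G)+1)$. Your injectivity observation (each neighbour determines its swap pair uniquely) is a clean way to phrase the same count.
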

\begin{proof}
	For a vertex of $Aux(G)$ there is an associated , the number of edges of $G$ not belonging to it is $m(g)-n(G)+1$. For each of those edges, adding them to the tree results in a cycle. The length of this cycle is at most $n(G)$, and thus the number of edges on the cycle, different from the one that was added is at most $n(G)-1$. Removing any of these edges generates a new spanning tree of $G$ and thus a neighbour of the vertex considered in $Aux(G)$. Combining these observations gives the upper bound on the maximum degree. 
\end{proof}
\begin{lemma}
	{\bf Min Degree:}
	\[\delta(Aux(G))\ge2*((m(G)-n(G)+1)\]
\end{lemma}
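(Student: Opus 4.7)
The plan is to fix an arbitrary vertex $x$ of $Aux(G)$, identify it with the corresponding spanning tree $T$ of $G$, and directly lower-bound $\deg_{Aux(G)}(x)$ by counting the distinct neighbours produced by Type $I$ unit transformations applied to $T$. Since $T$ has $n(G)-1$ edges, the number of non-tree edges is exactly $m(G)-n(G)+1$. I would process these non-tree edges one by one.

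For each non-tree edge $e \notin E(T)$, the unicyclic graph $T+e$ contains a unique cycle $C_e$. Because $G$ is simple, $|C_e| \ge 3$, so $C_e$ contains at least two edges of $T$. For each such tree edge $f \in C_e \setminus \{e\}$, the graph $T_{e,f} := T - f + e$ is a spanning tree of $G$ adjacent to $T$ in $Aux(G)$. This produces at least $|C_e|-1 \ge 2$ neighbours per non-tree edge.

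The crucial step is to check that as $e$ ranges over the $m(G)-n(G)+1$ non-tree edges, the resulting sets of neighbours are pairwise disjoint. I would argue this via a symmetric difference computation: the only non-tree edge (with respect to $T$) that appears in $T_{e,f}$ is $e$ itself, since $E(T_{e,f}) \cap (E(G)\setminus E(T)) = \{e\}$. Hence if $T_{e,f} = T_{e',f'}$ for non-tree edges $e, e'$ of $T$, then $e = e'$, and then $f = f'$ follows from comparing edge sets. This confirms that neighbours produced by distinct non-tree edges are distinct, and of course neighbours produced by the same non-tree edge but different tree edges are distinct as well.

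Putting this together yields
\[
\deg_{Aux(G)}(x) \;\ge\; \sum_{e \in E(G)\setminus E(T)} \bigl(|C_e|-1\bigr) \;\ge\; 2\cdot(m(G)-n(G)+1),
\]
and since $x$ was arbitrary, $\delta(Aux(G)) \ge 2(m(G)-n(G)+1)$. There is no real obstacle here; the only subtlety worth being careful about is the disjointness claim across different non-tree edges, which is what prevents overcounting and is what makes this a genuine lower bound rather than just a count-with-multiplicity. The argument runs completely parallel to the one for $\Delta(Aux(G))$ in the preceding lemma, simply replacing the upper bound $|C_e| \le n(G)$ on cycle length by the lower bound $|C_e| \ge 3$ forced by simplicity of $G$.
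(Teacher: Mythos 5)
Your proof is correct and follows essentially the same route as the paper: count, for each of the $m(G)-n(G)+1$ non-tree edges, the at least two neighbours arising from the cycle of length at least $3$ created in $T+e$. The only difference is that you make explicit the disjointness of the neighbour sets across distinct non-tree edges (via the observation that $e$ is the unique non-tree edge of $T_{e,f}$), a point the paper's proof leaves implicit but which is indeed what legitimises summing the counts.
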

\begin{proof}
	The proof is almost identical to the previous lemma, the only difference being that we use the lower bound on the length of the cycle created, rather than the upper bound. The lower bound is 3, since we are dealing with simple graphs. The rest of the arguments are identical.
\end{proof}
\begin{lemma}
	The diameter $diam(Aux(G))\le n(G)-1$.
\end{lemma}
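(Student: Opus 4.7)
The plan is to reduce the diameter bound on $Aux(G)$ directly to the tree-exchange facts that were recorded earlier in the paper. By construction, two vertices of $Aux(G)$ are adjacent precisely when the corresponding spanning trees differ by a single unit transformation, so the graph distance $d_{Aux(G)}(T_1,T_2)$ is exactly the minimum number of unit transformations needed to convert $T_1$ into $T_2$. Hence proving the diameter bound amounts to exhibiting, for any two spanning trees $T_1,T_2$ of $G$, a sequence of at most $n(G)-1$ unit transformations from $T_1$ to $T_2$.

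For the constructive step I would use the standard matroid exchange property phrased in terms of fundamental cycles. Let $k=|E(T_1)\cap E(T_2)|$, so that $|E(T_1)\setminus E(T_2)|=|E(T_2)\setminus E(T_1)|=n(G)-1-k$. Pick any edge $e\in E(T_2)\setminus E(T_1)$. Adding $e$ to $T_1$ creates a unique cycle $C_e$, which must contain at least one edge $f\in E(T_1)\setminus E(T_2)$, since otherwise $C_e$ would be a cycle inside $T_2$. Then $T_1' = (T_1-f)+e$ is a spanning tree obtained from $T_1$ by one unit transformation of type $I$, and $|E(T_1')\cap E(T_2)|=k+1$. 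Iterating this, each step strictly increases the intersection with $T_2$ by one, so after exactly $n(G)-1-k$ unit transformations we reach $T_2$.

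Since $k\ge 0$, the number of steps is at most $n(G)-1$, which gives a path of length at most $n(G)-1$ between the vertices of $Aux(G)$ corresponding to $T_1$ and $T_2$. As $T_1,T_2$ were arbitrary, $\mathrm{diam}(Aux(G))\le n(G)-1$ follows. This essentially reproves in detail the claim already asserted in the discussion of type $I$/type $II$ unit transformations, so one could alternatively just invoke that remark.

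There is not really a substantial obstacle here: the only content beyond the definitions is the fundamental-cycle exchange, which is completely standard. The only mild subtlety is to verify that each intermediate object $T_1'$ is indeed a spanning tree (equivalently, that the $f$ we remove is actually on the cycle $C_e$ so that $T_1-f+e$ is acyclic and connected); this is immediate from the structure of fundamental cycles.
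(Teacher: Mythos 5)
Your proof is correct and follows essentially the same route as the paper: the paper likewise bounds the distance in $Aux(G)$ by the number of unit transformations needed, which equals the size of the edge-set difference and hence is at most $n(G)-1$. You merely spell out the fundamental-cycle exchange step that the paper takes for granted from its earlier remark on unit transformations.
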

\begin{proof}
	The minimum number of operations to transform one spanning tree to the other is the size of the set difference of the edge sets of the two spanning trees. This can never be more than the number of edges in the tree, this bound being achieved in case of edge disjoint spanning trees. Thus the result follows.
\end{proof}

\begin{lemma}
	The {\bf Clique Number:}
	\[\omega(Aux(G))=Max\{{Circumference(G)},|{maximum\_ minimal \_ edge cut(G)}|\}\]
\end{lemma}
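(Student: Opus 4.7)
The plan is to use the classification of maximal cliques in $Aux(G)$ already developed in Section~\ref{seccliq}, together with Lemma~\ref{maxclique}. That classification tells us that every maximal clique of size at least three in $Aux(G)$ is of exactly one of two kinds: a \emph{cycle clique}, associated with a cycle $C$ of $G$ and having size equal to the length of $C$; or a \emph{minimal edge-cut clique}, associated with a minimal edge cut $\mathcal{E}$ of $G$ and having size equal to $|\mathcal{E}|$. Since every clique is contained in some maximal clique, this immediately delivers the upper bound $\omega(Aux(G)) \le \max\{\mathrm{Circumference}(G),\, |\mathrm{maximum\ minimal\ edge\ cut}(G)|\}$, after a brief check of the degenerate cases (if $G$ is a tree then $Aux(G)=K_1$ and the claim is vacuous; if $\omega(Aux(G))\le 2$ the bound is immediate).

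For the matching lower bound, I would exhibit explicit cliques realising each of the two candidate values. First, let $C$ be a cycle in $G$ of length $c=\mathrm{Circumference}(G)$. Pick any $c-1$ edges of $C$; these form a path, hence a forest, and can therefore be extended to a spanning tree $T$ of $G$. The $c$ spanning trees obtained from $T$ by substituting in turn each edge of $C$ for the unique omitted edge of $C$ pairwise differ by a single unit transformation of type~$I$ along $C$, giving a clique of size $c$ in $Aux(G)$. Second, let $\mathcal{E}$ be a minimal edge cut of $G$ of maximum size $k$. Removing $\mathcal{E}$ splits $G$ into two connected pieces $G_1$ and $G_2$; fix spanning trees $T_1$ and $T_2$ of them. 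Reconnecting $T_1\cup T_2$ by each edge of $\mathcal{E}$ in turn yields $k$ distinct spanning trees of $G$, each pair differing by a unit transformation of type~$II$ across $\mathcal{E}$; this gives a clique of size $k$ in $Aux(G)$.

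Combining these two constructions gives
\[
\omega(Aux(G)) \ge \max\{\mathrm{Circumference}(G),\, |\mathrm{maximum\ minimal\ edge\ cut}(G)|\},
\]
and together with the upper bound from the maximal-clique classification this yields the claimed equality.

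The only place requiring genuine care is the lower-bound construction, specifically the argument that a longest cycle's edges (minus one) can always be extended to a spanning tree of $G$; this is where one must invoke the fact that removing a single edge from a cycle leaves an acyclic graph, so standard spanning-tree extension applies. The upper bound is essentially bookkeeping on top of Lemma~\ref{maxclique}, and is not the hard part.
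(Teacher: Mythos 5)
Your proposal is correct and follows essentially the same route as the paper: both directions rest on the Section~\ref{seccliq} classification of maximal cliques into cycle cliques and minimal edge-cut cliques, with the explicit ``extend $c-1$ cycle edges to a spanning tree'' and ``reconnect the two sides of the cut'' constructions already given there. You merely spell out the lower-bound constructions and the degenerate cases more explicitly than the paper's terser argument.
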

\begin{proof}
	Every maximal clique in $Aux(G)$ corresponds to either a cycle in $G$ or a minimal edge cut in $G$ as explained along with the definitions of the two types of unit transformations. Thus the maximum clique size in $Aux(G)$, which is necessarily a maximal clique is a largest among these. Thus the result follows.
\end{proof}

\section{Conclusions}\label{secconcl}
We have looked at the important class of spanning tree auxiliary graphs and given a mathematical characterisation of such graphs. We have also developed efficient algorithms to obtain the original graph given a spanning tree auxiliary graph. A possible direction of future research, is to make minimum changes to a graph that is not a STAG, to obtain a new graph that is a STAG.

\bibliographystyle{plain} 
\bibliography{spaux}
\end{document}